\theoremstyle{plain}
  \newtheorem{theorem}{Theorem}
  \newtheorem{proposition}[theorem]{Proposition}
  \newtheorem{lemma}[theorem]{Lemma}
  \newtheorem{corollary}[theorem]{Corollary}
\theoremstyle{definition}
	\newtheorem{conj}[theorem]{Conjecture}
\theoremstyle{remark}
	\newtheorem{remark}[theorem]{Remark}
\DeclareMathAlphabet{\mathcal}{OMS}{cmsy}{m}{n}
\newcommand{\one}{\mathbbm{1}}
\newcommand{\A}{\mathbb{A}}
\newcommand{\C}{\mathbb{C}}
\newcommand{\F}{\mathbb{F}}
\newcommand{\G}{\mathbb{G}}
\newcommand{\K}{K}
\newcommand{\N}{\mathbb{N}}
\newcommand{\Q}{\mathbb{Q}}
\newcommand{\R}{\mathbb{R}}
\newcommand{\Z}{\mathbb{Z}}
\newcommand{\sce}{\mathscr{E}}
\newcommand{\cala}{\mathcal{A}}
\newcommand{\cald}{\mathcal{D}}
\newcommand{\calh}{\mathcal{H}}
\newcommand{\calm}{\mathcal{M}}
\newcommand{\calo}{\mathcal{O}}
\newcommand{\calp}{\mathcal{P}}
\newcommand{\cals}{\mathcal{S}}
\newcommand{\calu}{\mathcal{U}}
\newcommand{\mfa}{\mathfrak{a}}
\newcommand{\mfb}{\mathfrak{b}}
\newcommand{\mfc}{\mathfrak{c}}
\newcommand{\mff}{\mathfrak{f}}
\newcommand{\mfl}{\mathfrak{l}}
\newcommand{\mfs}{\mathfrak{S}}
\newcommand{\rmn}{\mathrm{N}}
\newcommand{\Ad}{{\mathrm{Ad}}}
\newcommand{\Aut}{{\mathrm{Aut}}}
\renewcommand{\Im}{{\mathrm{Im}}}
\newcommand{\gal}{\mathrm{Gal}}
\newcommand{\Gal}{{\mathrm{Gal}}}
\newcommand{\Hom}{{\mathrm{Hom}}}
\newcommand{\ind}{\mathrm{Ind}}
\newcommand{\Ind}{{\mathrm{Ind}}}
\newcommand{\Reg}{{\mathrm{Reg}}}
\newcommand{\gl}{\mathrm{GL}}
\newcommand{\GL}{{\mathrm{GL}}}
\newcommand{\GO}{{\mathrm{GO}}}
\newcommand{\GSO}{{\mathrm{GSO}}}
\newcommand{\PGL}{{\mathrm{PGL}}}
\newcommand{\PSL}{{\mathrm{PSL}}}
\newcommand{\SO}{{\mathrm{SO}}}
\newcommand{\SL}{{\mathrm{SL}}}
\newcommand{\Frob}{{\mathrm{Frob}}}
\newcommand{\frob}{{\mathrm{Frob}}}
\newcommand{\Id}{{\mathrm{Id}}}
\newcommand{\SD}{{\mathrm{SD}}}
\newcommand{\Tr}{{\mathrm{Tr}}}
\newcommand{\hv}{\mathrm{HV}}
\newcommand{\rs}{\mathrm{RS}}
\newcommand{\opt}{{\mathrm{opt}}}
\newcommand{\stark}{{\mathrm{Stark}}}
\newcommand{\Stark}{{\mathrm{Stark}}}
\newcommand{\bs}{\backslash}
\newcommand{\wt}{\widetilde}
\newcommand{\lra}{{\, \longrightarrow \,}}
\newcommand{\iso}{\, \xrightarrow{\widesim{}} \,}
\newcommand{\paren}[1]{\mathopen{}\left(#1\right)\mathclose{}}
\newcommand{\sbrac}[1]{\mathopen{}\left[#1\right]\mathclose{}}
\newcommand{\abrac}[1]{\mathopen{}\left\langle#1\right\rangle\mathclose{}}
\newcommand{\verts}[1]{\mathopen{}\left\lvert#1\right\rvert\mathclose{}}
\newcommand{\norm}[1]{\mathopen{}\left\lvert\left\lvert#1\right\rvert\right\rvert\mathclose{}}
\newcommand\restr[2]{{
  \left.\kern-\nulldelimiterspace 
  #1 
  \right|_{#2} 
  }}
\newcommand{\pair}[1]{\abrac{#1}}
\newcommand{\wh}{\widehat}
\newcommand{\widesim}[2][2]{
  \mathrel{\overset{#2}{\scalebox{#1}[1]{$\sim$}}}
}
    \def\@thm#1#2#3{%
      \ifhmode
        \unskip\unskip\par
      \fi
      \normalfont
      \trivlist
      \let\thmheadnl\relax
      \let\thm@swap\@gobble
      \let\thm@indent\indent 
      \thm@headfont{\scshape}
      \thm@notefont{\fontseries\mddefault\upshape}%
      \thm@headpunct{.}
      \thm@headsep 5\p@ plus\p@ minus\p@\relax
      \thm@space@setup
      #1
      \@topsep \thm@preskip               
      \@topsepadd \thm@postskip           
      \def\dth@counter{#2}%
      \ifx\@empty\dth@counter
        \def\@tempa{%
          \@oparg{\@begintheorem{#3}{}}[]%
        }%
      \else
        \H@refstepcounter{#2}%
        \hyper@makecurrent{#2}%
        \let\Hy@dth@currentHref\@currentHref
        \AddToHookNext{para/begin}{\MakeLinkTarget*{\Hy@dth@currentHref}}%
        \def\@tempa{%
          \@oparg{\@begintheorem{#3}{\csname the#2\endcsname}}[]%
        }%
      \fi
      \@tempa
    }%
\@clubpenalty \everypar{}%
\title[The Harris--Venkatesh conjecture for derived Hecke operators II]
{The Harris--Venkatesh conjecture for derived Hecke operators II:
a unified Stark conjecture}
\author{Robin Zhang}
\address{Department of Mathematics, Columbia University \newline
	\indent Department of Mathematics, Massachusetts Institute of Technology}
\email{rzhang@math.columbia.edu, robinz@mit.edu}
\date{June 3, 2024}
\begin{document}

\begin{abstract}
	\normalsize
	We prove a compatibility
	theorem between the Stark conjecture and the
	Harris--Venkatesh conjecture for
	imaginary dihedral modular forms of weight $1$.
	The key technical input is a general
	two-variable $\PGL_2$ Siegel--Weil formula
	that precisely gives the Laurent series coefficients
	of Eisenstein series as a dual theta lift of
	Eisenstein series.
	This two-variable Siegel--Weil formula is applied
	to Rankin--Selberg periods of imaginary dihedral optimal forms
	and a refinement of Stark's formula relating $L$-values
	with elliptic units, yielding compatibility between
	derived Hecke operators and
	adjoint $L$-values for Deligne--Serre representations.
	We formulate a general unified conjecture
	encompassing the Stark and Harris--Venkatesh conjectures
	and conceptually reinterpret the action of derived Hecke operators
	as modular Stark unit data at almost all primes.
\end{abstract}

\maketitle

\setcounter{tocdepth}{1}
\tableofcontents

\section{Introduction}
\label{sec:intro}

The Stark conjecture \cite{stark-1971,stark-1975,stark-1976,stark-1980}
relates the leading
coefficient of Artin $L$-functions to the existence of
certain algebraic units called \textit{Stark units}.
The work of Harris--Venkatesh \cite{hv} examines
modular forms of weight-$1$, which occur in both
the degree-$0$ and degree-$1$ coherent cohomology of the
same weight $1$ automorphic vector bundle unlike modular forms
of weight $\geq 2$, and construct
\textit{derived Hecke operators}
$T_{p, N}: H^0(X_1(N)_{\Z/(p-1)\Z}, \omega) \rightarrow
H^1(X_1(N)_{\Z/(p-1)\Z}, \omega)$.
These degree-shifting operators are the subject of
the vast conjectures of
Venkatesh \cite{venkatesh-2},
Prasanna--Venkatesh \cite{prasanna-venkatesh},
and Galatius--Venkatesh \cite{galatius-venkatesh} on the interplay
between derived Hecke algebras, rational structures of
motivic cohomology groups, and derived Galois deformation rings.
The Harris--Venkatesh conjecture \cite{hv} relates the action of
derived Hecke operators on modular forms of weight $1$
to algebraic units modulo $p$ for almost
all primes $p$.
Inspired by a question of Gross,
we prove that the Harris--Venkatesh conjecture and the
Stark conjecture actually describe the same units in the
case of imaginary dihedral modular forms of weight $1$
and suggest that there is a more general phenomenon.

Let $f = \sum_n a_n q^n$ be a newform of weight $1$ for $\Gamma_1(N)$
with associated Deligne--Serre representation $\rho_f$,
a $2$-dimensional complex Galois representation of
$\Gal(\overline{\Q}/\Q)$. It has a $3$-dimensional
trace-free adjoint representation $\Ad(\rho_f)$
with finite projective image that can either be dihedral
or exotic (isomorphic to $A_4$, $S_4$, or $A_5$).
We view $f$ as a modular form with coefficients in the ring of
integers $\Z[f] = \Z[\chi_{\rho_f}]$ of the number field
$\Q(f) = \Q(\chi_{\rho_f})$ generated by the trace of $\rho_f$
(or coefficients of $f$).
Let $E/\Q$ be a finite Galois extension such that
$\Ad(\rho_f)$ factors through $\Gal(E/\Q)$ and
let $M$ be the lattice in $\Ad(\rho_f)$ generated by 
elements of the form
$2\rho_f(\sigma) - \Tr(\rho_f(\sigma))\Id_{2 \times 2}$
with $\sigma \in \Gal(E/\Q)$.
The conjectures of Stark and
Harris--Venkatesh relate invariants of $f$
to elements in a (dual) space of units:
\[
	\calu\big(\Ad(\rho_f)\big) := \Hom_{\Gal(E/\Q)}
		\paren{M, \calo_E^\times \otimes \Z\sbrac{\chi_{\rho_f}}}.
\]
This is a rank-$1$ $\Z[\chi_{\rho_f}]$-module
that, like $M$, does not depend on the choice of $E$
(as detailed in \cite[Lemma 2.1]{hv}, \cite[Corollary 2.6]{horawa}).
Letting $\Z_{(p-1)}$ be subring of $\Q$ with denominators
coprime to $p-1$, there are regulator maps with values in $\R$ and
$\F_p^\times$ for any $p \nmid 6N$,
\begin{align*}
	\Reg_\R: \calu\big(\Ad(\rho_f)\big) \otimes \Q
		&\lra \R \otimes \Q\sbrac{\chi_{\rho_f}} \\
	\Reg_{\F_p^\times}: \calu\big(\Ad(\rho_f)\big) \otimes \Z_{(p-1)}
		&\lra \F_p^\times \otimes \Z\sbrac{\chi_{\rho_f}, \frac{1}{6N}},
\end{align*}
obtained from evaluation maps on distinguished elements $x_w \in M$
at an archimedean place $w$ or a non-archimedean place $w$ above $p$
(see \cite{dhrv,zhang-hv}).

Let $f^* = \sum_n \overline{a}_n q^n$ be the complex conjugate form of $f$
given by $f^*(z) := \overline{f(-\overline{z})}$,
and let $f_*$ be the Serre-dual of $f^*$
given by $\abrac{f^*, f_*}_\SD = 1$. 
The Harris--Venkatesh conjecture predicts that there
are global objects $u \in \calu(\Ad(\rho_f))$ and $m \in \N$
such that $m \cdot T_{p, N}(f) = \Reg_{\F_p^\times}(u) \cdot f_*$
simultaneously for every prime $p \nmid 6N$.
Using the Shimura class $\mfs_p \in \Hom(S_2(\Gamma_0(p)), \F_p^\times \otimes \Z[1/6])$
as described in \cite[Section 3.1]{hv},
\cite[Section 1.1]{dhrv}, and \cite{zhang-hv}, the
prescribed equation can be written more explicitly as
\[
	m \cdot \mfs_p\paren{\Tr_{\Gamma_0(p)}^{\Gamma_0(p) \cap \Gamma_1(N)}\big(f(z)f^*(pz)\big)}
		= \Reg_{\F_p^\times}\paren{u}.
\]
As noted in \cite[Section 1.2]{dhrv}, both sides are independent of the choice of
a finite place above $p$. For notational convenience,
we will use the Harris--Venkatesh norm defined for any prime $p \nmid 6N$:
\[
	\norm{f}^2_{\F_p^\times} :=
		\mfs_p\paren{\Tr_{\Gamma_0(p)}^{\Gamma_0(p) \cap \Gamma_1(N)}\paren{f(z)f^*(pz)}}
		\in \F_p^\times \otimes \Z\sbrac{\chi_{\rho_f}, \frac{1}{6N}}.
\]

The Stark conjecture \cite{stark-1971,stark-1975,stark-1976,stark-1980}
for $\Ad(\rho_f)$ predicts that
there is a \textit{unique}
(up to multiplication by roots of unity) element
$u_\stark \in \calu(\Ad(\rho_f)) \otimes \Q$ that 
describes the leading coefficient of the Artin $L$-functions:
\[
	L'\big(\Ad(\rho_f), 0\big) = \Reg_\R\paren{u_\stark},
\]
and compatibly so with Galois conjugation of $\Ad(\rho_f)$.
As observed by Stark \cite[Section 6]{stark-1975},
the Petersson norm
\[
	\norm{f}_\R^2
		= \int_{X_0(N)} \verts{f}^2 \frac{dxdy}{y} \in \R,
\]
is proportional to $L'(\Ad(\rho_f), 0)$
by a constant involving a finite product of Euler factors
only depending on $f$. Both the
Stark regulator and the Petersson norm
vary with the choice of complex embedding
(i.e. Galois conjugation of $\Ad(\rho_f)$)
in the same way.

Most newforms of weight $1$ are dihedral,
in which case there is a quadratic number field $K$
such that $\rho_f$ is induced from a character $\chi$ of
$\Gal(\overline{K}/K)$.
When $f$ is imaginary dihedral ($K/\Q$ is imaginary quadratic),
the Stark conjecture for $\Ad(\rho_f)$ was
proved in the fourth part of Stark's series of papers
\cite[Theorem 2]{stark-1980}
and the Harris--Venkatesh conjecture was proved
in the first part of this series \cite[Theorem 4]{zhang-hv}. 
We prove that these two theorems are compatible,
precisely relating the $u_\stark$ of the Stark conjecture
with the $u$ of the Harris--Venkatesh conjecture.
\begin{theorem}
	\label{thm:hvs-imaginary}
	Let $f$ be an imaginary dihedral newform of weight $1$ and level $N$.
	There is a unique (up to multiplication by roots of unity)
	$u_f \in \calu(\Ad(\rho_f)) \otimes \Q$
	such that,
	\begin{enumerate}[(a)]
		\item
			\[
				\norm{f}^2_{\R} = \Reg_\R\paren{u_f},
			\]
			compatible with conjugations of $f$ under $\Aut(\C)$;
		\item for each sufficiently large prime $p$,
			\[
				\norm{f}^2_{\F_p^\times} = \Reg_{\F_p^\times}\paren{u_f}.
			\]
	\end{enumerate}
\end{theorem}

\begin{remark}
	\label{rem:hvs-equivalences}
	Condition (a) of
	Theorem \ref{thm:hvs-imaginary} is equivalent
	to the Stark conjecture for $\Ad(\rho_f)$
	because the ratio of $u_f$ and $u_\Stark$ corresponds
	to the proportionality of $\norm{f}^2_{\R}$ and $L'(\Ad(\rho_f), 0)$.
	Without the uniqueness of $u_f$, condition (b) is equivalent
	to the Harris--Venkatesh conjecture
	because $u$ can be chosen to be any multiple of $u_f$
	and both sides vanish mod $p$ for primes smaller
	than some minimal prime $p_0$
	when both sides are multiplied by a positive integer
	$m \in \paren{\prod_{p < p_0} p} \, \N$.
\end{remark}

\begin{remark}
	\label{rem:horawa-hvs-imaginary}
	Horawa \cite[Conjecture 1.1]{horawa} reformulates
	the Harris--Venkatesh conjecture and the Stark conjecture
	in the motivic language of Venkatesh \cite{venkatesh-2}
	and Prasanna--Venkatesh \cite{prasanna-venkatesh}.
	In that context,
	Theorem \ref{thm:hvs-imaginary} gives a strengthening of
	the $F = \Q$ imaginary dihedral case of
	\cite[Conjecture 1.1 and Theorem 1.2]{horawa}
	by removing the ambiguity
	of $\calo_E^\times$ and $\Q(\chi_{\rho_f})^\times$.
	Theorem \ref{thm:hvs-imaginary} demonstrates
	the existence of an action of
	$U_f^\vee$ on $H^*(X_{\Z[1/N]}, \omega)_f$ such that,
	\begin{enumerate}[(a)]
		\item the $(U_f^\vee \otimes \C)$-action coincides with
			$u \in U_f^\vee \otimes \Q$ sending
			$f \mapsto \frac{\omega_f^\infty}{\log \verts{u}}$;
		\item the $(U_f^\vee \otimes \F_p)$-action coincides with
			the action of the derived Hecke operator.
	\end{enumerate}
\end{remark}

The key technical input needed for the proof of
Theorem \ref{thm:hvs-imaginary} is a precise
two-variable Siegel--Weil formula on $\PGL_2$.
The classical Siegel--Weil formula is an identity between
an integral of a theta function and a value of a classical Eisenstein series,
dating back to the seminal works of Siegel \cite{siegel} and Weil \cite{weil1,weil2}.
It has been extended to residues (e.g. second-term identities),
to points outside the region of absolute convergence,
and to different groups since then
(e.g. \cite{kudla-rallis-1,kudla-rallis-2,kudla-rallis-3,ikeda,ichino-sw,yamana-1,yamana-2,yamana-3,gan-qiu-takeda}).
This includes a general framework for the special orthogonal
groups corresponding to the Siegel--Weil formula
for $\PGL_2 \times \PGL_2$, but the following
explicit form has not been written down before
to the author's knowledge.
Let $\sbrac{\PGL_2} := \PGL_2(\Q) \bs \PGL_2(\A)$
and let $E_s = \sum_{n = -1}^\infty \sce_n s^n$
be the automorphic avatar of the classical Eisenstein
series with its Laurent expansion.
Our two-variable Siegel--Weil formula for the
Eisenstein series $E_s$
uses the theta lifting (and its dual) from the space $\cala_2$
of automorphic forms perpendicular to constant functions
to the space $\cala_1$ of functions that decay at cusps:
\[
	\begin{tikzcd}[row sep=tiny]
		\cala_2\paren{\sbrac{\PGL_2} \times \sbrac{\PGL_2}}
			\arrow[r, "\Theta"] & \cala_1\paren{\sbrac{\PGL_2}} \\
		\cala_1^*\paren{\sbrac{\PGL_2}}
			\arrow[r, "\Theta^*"] & \cala_2^*\paren{\sbrac{\PGL_2} \times \sbrac{\PGL_2}}.
	\end{tikzcd}
\]

\begin{theorem}
	\label{thm:theta-dual-eisenstein-2}
	For the Eisenstein series $E_s$,
	\[
		\Theta^*\paren{E_s} = E_s \otimes E_s.
	\]
	Furthermore, for any $n \geq -1$,
	\[
		\Theta^*(\sce_n) = \sum_{i + j = n} \sce_i \cdot \sce_j.
	\]
\end{theorem}

A special case of Theorem \ref{thm:theta-dual-eisenstein-2}
is the identity
\[
	\Theta^*(\one) = \one \otimes \sce_0 + \sce_0 \otimes \one,
\]
which extends the formula of
Darmon--Harris--Rotger--Venkatesh \cite[Theorem 5.4]{dhrv}
from the setting of higher Eisenstein series developed
by Merel \cite{merel2} and Lecouturier \cite{lecouturier}
to Laurent coefficients of Eisenstein series.
This identity can be applied to
explicitly calculate Rankin--Selberg convolutions of certain theta series,
including those corresponding to the two-variable optimal forms defined
in \cite[Section 6]{zhang-hv}.
By the theory of Rankin--Selberg periods
and multiplicity-one arguments developed in \cite{zhang-hv},
these calculations then give rise to units satisfying
the Harris--Venkatesh conjecture.
We then precisely pin down the unit $u_f$
using the Kronecker limit formula
for the Laurent coefficient $\sce_0$
and a refinement of Stark's formula \cite[Lemma 7]{stark-1980}
relating $L$-values with the
explicit elliptic unit $u_\xi$ defined in \cite[Equation 8.2]{zhang-hv}
for ring class characters $\xi$.
As in Stark's conjecture
\cite[Conjecture 1]{stark-1980}
and Stark's formula \cite[Lemma 7]{stark-1980}
for imaginary quadratic fields $K$,
we focus on ring class characters of conductor $c>1$
due to the non-vanishing of $\zeta_K(0)$.
\begin{proposition}
	\label{prop:stark2}
	Let $K$ be an imaginary quadratic number field.
	If $\xi$ is a ring class character of conductor $c > 1$,
	then its Hecke $L$-function satisfies
	\[
		L'(\xi, 0) = -\frac{1}{6m(\xi)} \log\paren{u_\xi}.
	\]
\end{proposition}

Finally, we propose that the derived Hecke operators
of Harris--Venkatesh give the mod-$p$ data of
the Stark unit for \textit{all} newforms of weight $1$
and almost all primes $p$.

\begin{conj}
	\label{conj:hvs}
	Let $f$ be a newform of weight $1$ and level $N$.
	There is a unique (up to multiplication by roots of unity)
	$u_f \in \calu(\Ad(\rho_f)) \otimes \Q$
	such that,
	\begin{enumerate}[(a)]
		\item
			\[
				\norm{f}^2_{\R} = \Reg_\R\paren{u_f},
			\]
			compatible with conjugations of $f$ under $\Aut(\C)$;
		\item for each sufficiently large prime $p$,
			\[
				\norm{f}^2_{\F_p^\times} = \Reg_{\F_p^\times}\paren{u_f}.
			\]
	\end{enumerate}
\end{conj}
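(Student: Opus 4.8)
The plan is to construct $u_f$ directly from the Stark unit and then verify the mod-$p$ identity (b) as a separate, compatible computation. Concretely, set
\[
	u_f := \frac{\norm{f}_\R^2}{L'\paren{\Ad(\rho_f), 0}} \cdot u_\stark \in \calu\paren{\Ad(\rho_f)} \otimes \Q,
\]
so that (a) holds by construction; the uniqueness and $\Aut(\C)$-compatibility of $u_f$ are then inherited from those of $u_\stark$ once one checks that the scalar $\norm{f}_\R^2 / L'(\Ad(\rho_f),0)$ lies in $\Q[\chi_{\Ad(\rho_f)}]$ and transforms correctly under conjugation of $f$. With this $u_f$ fixed, proving (b) reduces to matching two \emph{a priori} independently computed rational numbers: the archimedean ratio displayed above, and the ratio relating $\norm{f}_{\F_p^\times}^2$ to $\Reg_{\F_p^\times}(u_\stark)$.

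For the archimedean input I would use the Rankin--Selberg/Shimura integral expressing $\norm{f}_\R^2$ as an explicit product of $\Gamma$-factors and powers of $\pi$ times $L(\Ad(\rho_f),1)$, followed by the functional equation for the Artin $L$-function $L(\Ad(\rho_f),s)$ to pass from $L(\Ad(\rho_f),1)$ to $L'(\Ad(\rho_f),0)$; the transcendental factors cancel, leaving a constant that is rational up to algebraic factors of discriminant type. Specializing to imaginary dihedral $f = \theta_\psi$ attached to a ring class character $\psi$ of an imaginary quadratic field $K$, one writes $\Ad(\rho_f) = \one \oplus \eta_K \oplus \Ind_K^\Q \xi$ with $\xi = \psi/\psi^c$, so $L(\Ad(\rho_f),s) = \zeta(s)L(\eta_K,s)L(\xi,s)$ and $u_\stark$ factors compatibly; the interesting factor is the Stark unit attached to $\xi/K$, which is a genuine elliptic unit because Stark's conjecture over imaginary quadratic fields is a theorem, while the $\zeta$ and $L(\eta_K,\cdot)$ factors contribute only elementary regulator bookkeeping.

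For the mod-$p$ input I would compute $\norm{f}_{\F_p^\times}^2 = \mfs_p\paren{\Tr_{\Gamma_0(p)}^{\Gamma_0(p)\cap\Gamma_1(N)}\paren{f(z)f^*(pz)}}$ by realizing the weight-$2$, degree-zero form $f(z)f^*(pz)$ through the theta correspondence from an Eisenstein series on the relevant orthogonal group, and then evaluating the Shimura class against it via a Rankin--Selberg period of the optimal (integrally normalized) eigenform in the dihedral family. This identifies the output of $\mfs_p$ with the reduction modulo $p$ of the regulator of the same elliptic unit, i.e.\ with $\Reg_{\F_p^\times}$ applied to the $\xi$-component of $u_\stark$, and produces a second rational proportionality constant. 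The final step is to verify that this constant equals the archimedean one $\norm{f}_\R^2/L'(\Ad(\rho_f),0)$, so that the single element $u_f$ above satisfies both (a) and (b) simultaneously.

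The main obstacle is precisely this matching of constants: both the Petersson-norm computation and the Shimura-class computation intrinsically determine $u_f$ only up to an explicit rational factor depending on normalizations of theta kernels, Eisenstein series, and the choice of optimal form, and the heart of the argument is to pin down both factors exactly — through the optimal-form normalization and a careful archimedean analysis of the theta lift — and show they coincide. Secondary difficulties include handling the precise lattice $M$ defining $\calu(\Ad(\rho_f))$, so that the statement holds with the correct integral structure rather than merely over $\Q$, and treating the degenerate sub-cases where $\xi$ is quadratic or trivial, in which $f$ fails to be genuinely dihedral and the regulator combinatorics must be adjusted.
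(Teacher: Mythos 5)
First, note that the statement you are addressing is Conjecture~\ref{conj:hvs}, which the paper does not (and cannot yet) prove in general; the paper proves only the imaginary dihedral case (Theorem~\ref{thm:hvs-imaginary}). You implicitly recognize this by specializing to $f = \theta_\psi$ midway through. With that restriction understood, your plan is essentially the same as the paper's: express the Petersson norm via the Rankin--Selberg unfolding and the adjoint $L$-function, produce the Stark/elliptic unit via Stark's theorem over imaginary quadratic fields, compute the archimedean Rankin--Selberg period of the optimal form through a theta lift of an Eisenstein series, and match the two resulting rational constants. The paper's specific implementation of the archimedean side (Sections~\ref{sec:eisenstein}--\ref{sec:rs-opt}) is the Siegel--Weil-type formula $\Theta^*(\one) = \one \otimes \sce_0 + \sce_0 \otimes \one$ combined with the Kronecker limit formula for $\sce_0$ evaluated on CM points; this is the precise mechanism by which the elliptic unit emerges, which your sketch alludes to but leaves implicit.

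Two substantive remarks. First, there is a genuine error in your decomposition: $\Ad(\rho_f) = \eta_K \oplus \Ind_K^\Q \xi$ is $3$-dimensional and has no trivial summand; what you wrote, $\one \oplus \eta_K \oplus \Ind_K^\Q \xi$, is $\rho_f \otimes \rho_f^\vee$. Consequently $L(\Ad(\rho_f), s) = L(\eta_K, s) L(\xi, s)$, with no $\zeta(s)$ factor. The $\zeta(s)$ you expect does appear, but only in the Rankin--Selberg convolution $L(f \times f^*, s) = C(s)\,\zeta(s)\,L(\Ad(\rho_f), s)$ (Theorem~\ref{thm:crs}), not inside $L(\Ad(\rho_f), s)$ itself; the extra $\zeta(0) = -\tfrac12$ would distort your final constant. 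Second, your plan defines $u_f$ by the ratio $\norm{f}_\R^2 / L'(\Ad(\rho_f),0)$ and then checks (b); the paper instead proves a reformulation (Theorem~\ref{thm:hvs-reform}) that lets one test the conjecture on any test vector $\varphi$ with $\calp_\rs(\varphi) \neq 0$, and then chooses $\varphi^\opt$. This sidesteps the need to compute $c_\rs$ in closed form for the newform (that explicit computation is deferred to a companion paper). Instead the same constant $-\tfrac{[H_c:H_1]w_K}{2}$ drops out of the theta-lift/Kronecker computation of $\calp_\rs(\varphi^\opt)$ (Theorem~\ref{thm:opt-rs}) and out of the prior paper's computation of $\calp_\hv(\varphi^\opt)$ once the latter is rewritten in terms of $u_\stark$ (Proposition~\ref{prop:stark-dihedral}, Corollary~\ref{cor:stark-opt}). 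This is cleaner than your scheme of computing the archimedean ratio directly and then separately verifying that the mod-$p$ ratio agrees.
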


\begin{remark}
	\label{rem:totally-real}
	As with Theorem \ref{thm:hvs-imaginary},
	Conjecture \ref{conj:hvs} is a strengthening
	of the $F = \Q$ case of
	\cite[Conjecture 1.1 and Theorem 1.2]{horawa}.
	Horawa \cite[Conjecture 1.1]{horawa} also considers
	totally real base fields $F$;
	we expect that there is a version of Conjecture \ref{conj:hvs}
	for totally real $F$.
\end{remark}

There is numerical evidence for
the Stark and Harris--Venkatesh conjectures
in the exotic cases (e.g. \cite{chinburg,fogel,jrs,marcil}),
but the Stark and Harris--Venkatesh conjectures are still open for
most exotic newforms. Conjecture \ref{conj:hvs}
suggests that the Stark and Harris--Venkatesh conjectures
should be at least as difficult as the other.
One can speculate that in this picture,
one could approach Stark units
by understanding $\norm{f}^2_{\F_p^\times}$ at all but finitely many
primes $p$.

While the construction of Stark units for exotic newforms
still eludes current techniques, the theory of optimal forms
developed in \cite{zhang-hv}
can be extended to a large class of exotic newforms.
This is the subject of the third part of this series
\cite{zhang-rs}, where the minimal prime $p_0$ of
condition (b) of Theorem \ref{thm:hvs-imaginary}
and Conjecture \ref{conj:hvs} is
precisely described using local data
and where the local theory is extended to
\textit{locally dihedral forms} (including exotic weight-$1$
newforms $f$ of odd level or $2$-ordinary
Deligne--Serre representation $\rho_f$).

\subsection{Acknowledgements}

This work would not have been possible without the support 
and encouragement of Michael Harris,
who advised the thesis that this article extends,
and Dick Gross, who asked insightful questions about the units
in my thesis and encouragingly described the resolution given herein
as showing that there is "one unit to rule them all".
I would also like to thank Henri Darmon, Emmanuel Lecouturier,
Yingkun Li, Lo\"{i}c Merel, and Akshay Venkatesh for
illuminating conversations,
as well as Aleksander Horawa
for pointing out the connections in Remark \ref{rem:totally-real}.

This material is based on work
supported by the National Science Foundation
under Grant No. DGE-1644869 and Grant No. DMS-2303280.


\numberwithin{equation}{section}
\numberwithin{theorem}{section}


\section{A formula for theta liftings of Eisenstein series}
\label{sec:eisenstein}
\subsection{\texorpdfstring{$\theta$}{theta} lifting}
\label{subsec:theta}
Let $V = (M_2(\Q), \det)$ with a
split quadratic form of dimension $4$.
Let $\GO(V)$ denote 
the group of orthogonal similitudes of $V$, i.e.
the group of elements $g \in \GL(V)$ such that
$\pair{gx, gy} = \nu(g)\pair{x, y}$ for some $\nu(g) \in \Q^\times$.
For $g \in \GO(V)$, we denote the conjugate by
$\overline{g} = \nu(g) g^{-1}$.
Let $\GSO(V)$ be the connected component
of the identity of $\GO(V)$,
\[
	\GSO(V) = (\GL_2 \times \GL_2) / \Delta \G_m.
\]
With square brackets, $[G]$ denotes the quotient
$G(\Q) \bs G(\A)$.

We have the Weil representation $r$ of
$\GL_2(\A) \times_{\G_m} \GSO(V_\A)$ on $\cals(V_\A)$ and
a theta kernel for each
$\Phi \in \cals(V_\A)$ (cf. \cite[Section 2.1]{yzz}
and \cite[Section 5.1]{zhang-hv}),
\[
	\theta(g, h, \Phi) := \sum_{x \in V} r(g, h) \Phi(x).
\]
We want to consider the theta liftings between forms on
$[\GL_2]$ and $[\GSO(V)]$.
For any automorphic function $\varphi$ on
$[\GSO(V)]$ and $\Phi \in \cals(V_\A)$,
we have a function,
\[
	\theta(g, \varphi, \Phi)
		= \int_{[\SO(V)]} \theta\paren{g, hh_0, \Phi}
			\varphi\paren{hh_0} dh,
\]
where $h_0 \in \GSO(V)$ is an element with norm $\det g$ to
ensure that $h h_0$ is in $\GSO(V)$ and has the same norm as
$g$.

Let $y \in \A^\times$ and $h_1 \in \GSO (V)$ such that $\nu(h_1) = y$.
Then we have for
$a(y) := \begin{psmallmatrix}y & 0 \\ 0 & 1\end{psmallmatrix}$,
\begin{align}
	\label{eq:theta-ay}
	\theta(a(y)g, \varphi, \Phi)
		&= \int_{[\SO(V)]} \theta\paren{a(y)g, hh_1h_0, \Phi}
			\varphi\paren{hh_1h_0} dh \\
		&= \int_{[\GSO(V)]} \sum_{x \in V} r\paren{g, h_0}
			\Phi\paren{\overline{h}_1 \overline{h} x} \verts{y}
			\varphi\paren{hh_1h_0} dh. \nonumber
\end{align}
Here we used the fact that for
$d(y) := \begin{psmallmatrix} 1 & \\ & y \end{psmallmatrix}$
and
$m(y) := \begin{psmallmatrix} y & \\ & y^{-1} \end{psmallmatrix}$
(cf. \cite[Section 5.1]{zhang-hv}),
\begin{align*}
	r\paren{a(y), h_1} \Phi(x)
	&= r\paren{m(y)} r\paren{d(y), h_1} \Phi(x) \\
	&= \verts{y}^2 r\paren{d(y), h_1} \Phi(yx) \\
	&= \verts{y}^2 \verts{y}^{-1} \Phi\paren{h_1^{-1}y x} \\
	&= \verts{y} \Phi\paren{\overline{h}_1 x}.
\end{align*}
Every term in the sum has exponential decay in $y$, except the term
for $x=0$:
\[
	r\paren{g, h_0} \Phi(0) \int_{[\SO (V)]} \varphi\paren{hh_1h_0}dh.
\]
Let $\omega$ be a character of $\Q^\times \bs \A^\times$.
If $\varphi$ has central character $\omega^{-1}$,
then $\theta(g, \varphi, \Phi)$ has central character $\omega^{-1}$.

Let $\calm([\GL_2], \omega^{\pm 1})$
be the space of automorphic functions on $[\GL_2]$
with central character $\omega^{\pm 1}$ and rapid decay at cusps, and 
let $\calm([\GSO(V)], \omega^{\pm 1})$
be the space of automorphic functions $\varphi$ on $[\GSO(V)]$
with central character $\omega^{\pm 1}$ and rapid decay at cusps
such that $\int_{[\GSO(V)]} \varphi(hh_0) dh=0$
for all $h_0 \in [\GSO(V)]$.
Then we have defined a theta lifting:
\[
	\theta: \calm\paren{[\GSO(V)], \omega^{-1}} \otimes \cals\paren{V_\A}
		\lra \calm\paren{\sbrac{\GL_2}, \omega^{-1}}.
\]
We also have the dual theta lifting,
\[
	\theta^*: \calm^*\Big(\sbrac{\GL_2}, \omega\Big) \otimes \cals\paren{V_\A}
		\lra \calm^*\Big([\GSO(V)], \omega\Big).
\]

The main goal of this section is to compute
$\theta^*(h, E, \Phi)$ for the Siegel Eisenstein series,
\[
	E(g, f, s) = \sum_{\gamma \in P(\Q) \bs \GL_2(\Q)} f(\gamma g),
\]
where $f$ is a function on $\GL_2(\A) \times \C$ such that,
\[
	f\paren{\begin{pmatrix}a & b \\ 0 & d \end{pmatrix} k}
		= \mu_1(a) \mu_2(d) \verts{\frac{a}{d}}^{s} f(k),
\]
with $\mu_1, \mu_2$ two characters of $\Q^\times \bs \A^\times$
such that $\mu_1 \cdot \mu_2 = \omega$.
When $\Re s \gg 0$, the Eisenstein series
$E(g, f, s)$ is absolutely convergent.
Let $H_s(\mu_1, \mu_2)$ be the space of such functions $f$ on
$\GL_2(\A) \times \C$,
and let $\pi_s(\mu_1, \mu_2)$ be the space of Eisenstein series
forms $E(g, f, s)$ with $f \in H_s(\mu_1, \mu_2)$. 
\begin{theorem}
	\label{thm:theta-dual-eisenstein}
	$\theta^*$ defines a map,
	\[
		\theta^*: \pi_s\big(\mu_1, \mu_2\big) \otimes \cals\paren{V_\A}
			\lra \pi_s\big(\mu_1^{-1}, \mu_2^{-1}\big)
				\otimes \pi_s\big(\mu_2, \mu_1\big).
	\]
\end{theorem}

\begin{proof}
	Fix a $\Phi \in \cals (V_\A)$ and $f \in H_s(\mu_1, \mu_2)$.
	We can then write $E_s(g) := E(g, f, s)$ and
	$\theta^*(E) := \theta^*(E, \Phi)$.
	For any $\varphi \in \calm([\GSO(V)], \omega)$,
	unfolding the integral gives,
	\begin{align*}
		\pair{\theta^*\paren{E_s}, \varphi}
			&= \int_{\sbrac{\PGL_2}} \theta(g, \varphi, \Phi) E_s(g) dg \\
			&= \int_{P(\Q)Z(\A) \bs \GL_2(\A)} \theta(g, \varphi, \Phi) f(g) dg.
	\end{align*}
	Now we use the measure $dg = \verts{y}^{-1} dx d^\times ydk$
	for the Iwasawa decomposition $g=z n(x) a(y) k$ with
	$n(x) := \begin{psmallmatrix} 1 & x \\ & 1 \end{psmallmatrix}$
	and $k \in \K$,
	the standard maximal compact subgroup of $\SL_2(\Z)$,
	to obtain,
	\begin{align*}
		\pair{\theta^*\paren{E_s}, \varphi}
			&= \int_{P(\Q)Z(\A) \bs \GL_2(\A)} \theta\big(n(x)a(y)k, \varphi, \Phi\big)
				\mu_1(y) \verts{y}^{s-1} f(k) dx d^\times y dk.
	\end{align*}

	The integration over $dx$ amounts to replacing
	$\theta(g, \varphi, \Phi)$ by, 
	\[
		I_1 = \int_{[\SO(V)]}\sum_{x \in V_{\det =0}} r\big(g, h_0\big)
			\Phi\big(h^{-1}x\big) \varphi\big(hh_0\big) dh.
	\]
	Since $\varphi \in \calm([\GO (V)], \omega^{-1})$,
	there is no contribution from the term at $x = 0$.
	Therefore we can replace $V_{\det = 0}$ by
	the subset $X \subset V$ of non-zero elements with $\det = 0$.
	For integration over $\mu_1^{-1}(y)\verts{y}^{-1}d^\times y$,
	we use Equation \ref{eq:theta-ay} to replace $I_1$ by
	\[
		\int_{[\SO(V)]} \sum_{x \in X}
			r\paren{g, h_0} \Phi\big(\overline{h}_1 \overline{h} x\big)
			\varphi\paren{hh_1h_0} \verts{y}dh,
	\]
	where $h_1 \in \GSO(V)$ so that $\nu(h_1)=y$.
	Finally for the integration over $k$, we take a section 
	$h_k := (k, 1)$; therefore
	\begin{align*}
		\pair{\theta^*\paren{E_s}, \varphi}
		= \int_{\K}\int_{\Q^\times \bs \A^\times}
			\int_{[\SO(V)]} \sum_{x \in X} r\paren{k, h_k}
			\Phi\big(\overline{h}_1 \overline{h}x\big)
			\varphi\big(hh_1h_k\big) dh \mu_1^{-1}(y) \verts{y}^{s}dy f(k)dk
	\end{align*}
	The inside two integrals can be calculated together
	as a single integral,
	\begin{align*}
		\int_{[\SO(V)]} \sum_{x \in X} r\paren{k, h_k}
				&\Phi\big(\overline{h}_1 \overline{h} x\big)
				\varphi\big(hh_1h_k\big) dh \mu_1^{-1}(y) \verts{y}^{s}dy \\
			&= \int_{[\GO(V)]} \sum_{x \in X} r\big(k, h_k\big)
				\Phi\big(\overline{h}x\big) \varphi(h)
				\mu_1^{-1}\big(\nu (h)\big) \verts{\nu(h)}^{s} dh \\
			&= \int_{[\GSO(V)]} \sum_{x \in X} r\big(k, h_k\big)
				\Phi\big(\overline{h}x\big) \verts{\nu(h)}^{s}
				\mu_1^{-1}\big(\nu(h)\big) \varphi(h) dh\\
	\end{align*}
	Define $\wt{\Phi} \in \cals(V_\A)$ by,
	\[
		\wt{\Phi} := \int_\K  f(k) r(k, h_k) \Phi dk.
	\]
	Then we have,
	\[
		\pair{\theta^*(E_s), \varphi}
			= \int_{[\GSO]} \sum_{x \in X} \wt{\Phi} \big(\overline{h}x\big)
				\verts{\nu(h)}^{s} \mu_1^{-1}\big(\nu(h)\big) \varphi(h) dh.
	\]
	Integrate over the center to obtain,
	\begin{align*}
		\theta^*(E_s)
			= &\mu_1^{-1}\big(\nu(h)\big) \verts{\nu(h)}^{s}
				\int_{\Q^\times \bs \A^\times} \sum_{x \in X}
				\wt{\Phi}\big(z \overline{h}x\big)
				\mu_2\mu_1^{-1}(z) \verts{z}^{2s} dz \\
			= &\sum_{x \in X/\Q^\times}
				\mu_1^{-1}\big(\nu(h)\big) \verts{\nu(h)}^{s}
				\int_{\A^\times} \wt{\Phi}\big(z \overline{h}x\big)
				\mu_2\mu_1^{-1}(z) \verts{z}^{2s} dz.
	\end{align*}
	A further change variable $z \mapsto z\nu(h)^{-1}$ gives,
	\[
		\theta^*(E_s)
			= \sum_{x \in X/\Q^\times}
			\mu_2^{-1}\big(\nu(h)\big) \verts{\nu(h)}^{-s}
			\int_{\A^\times} \wt{\Phi}\big(zh^{-1}x\big)
			\mu_2\mu_1^{-1}(z) \verts{z}^{2s} dz.
	\]

	Now we write this as an Eisenstein series.
	For this, we know that $X/\Q^\times$ is the orbit of $\Q x_0$ with 
	$x_0 = \begin{psmallmatrix} 0 & 1 \\ 0 & 0 \end{psmallmatrix}$ under
	$\GSO(V)= \GL_2 \times \GL_2 / \Delta(\G_m)$ with stabilizer
	$P \times P$, where $P$ is the subgroup of upper triangular matrices
	in $\GL_2$.
	Thus we can write 
	\[
		\theta^*(E_s)(h_1, h_2)
			= \sum_{(\gamma_1, \gamma_2) \in
				(P \bs \GL_2) \times (P \bs \GL_2)}
				F_s(\gamma_1 h_1, \gamma_2 h_2),
	\]
	where $F_s(h_1, h_2)$ is the function on
	$\GL_2(\A) \times \GL_2(\A)$,
	\[
		F_s(h_1, h_2)
			:= \mu_2^{-1}\paren{\det h_1h_2^{-1}}
				\verts{\det(h_1h_2^{-1})}^{-s}
				\int_{\A^\times} \wt{\Phi}\big(zh_1^{-1}x_0 h_2\big)
				\mu_2\mu_1^{-1}(z) \verts{z}^{2s} dz.
	\]

	We want to study the behavior of $F_s$ under left-translation by
	elements in $P(\A) \times P(\A)$.
	For the diagonal elements
	$m_1 = \begin{psmallmatrix} a_1 & 0 \\ 0 & b_1 \end{psmallmatrix}$
	and $m_2 = \begin{psmallmatrix} a_2 & 0 \\ 0 & b_2 \end{psmallmatrix}$,
	\begin{align*}
		F_s\big(m_1h_1, \, m_2h_2\big)
			= \, &\mu_2^{-1}\big(a_1 b_1 a_2^{-1} b_2^{-1}\big)
				\, \verts{a_1b_1a_2^{-1}b_2^{-1}}^{-s} \\
				&\cdot \mu_2\mu_1^{-1}\big(a_1 b_2^{-1}\big)
				\, \verts{a_1 b_2^{-1}}^{2s} \, F_s\big(h_1, h_2\big) \\
			= \, &\mu_1^{-1}\big(a_1\big)\mu_2^{-1}\big(b_1\big)
				\, \verts{\frac{a_1}{b_1}}^{s} \\
				&\cdot \mu_2\big(a_2\big) \mu_1\big(b_2\big)
				\, \verts{\frac{a_2}{b_2}}^{s} \, F_s\big(h_1, h_2\big).
	\end{align*}
	This implies that
	$F_s \in H_s(\mu_1^{-1}, \mu_2^{-1}) \otimes H_s(\mu_2, \mu_1)$.
	Therefore, $\theta^*(E) \in \pi_s(\mu_1^{-1}, \mu_2^{-1})
	\otimes \pi_s(\mu_2, \mu_1)$.
\end{proof}

\subsection{\texorpdfstring{$\Theta$}{Theta} lifting}
Take $\Phi = \Phi_\infty \otimes \Phi^\infty$
to be the standard Schwartz function
on $V(\A) = M_2(\A)$
(similar to \cite[Section 5.2]{zhang-hv} and
\cite[Definition 5.5]{zhang-hv} which 
define standard Schwartz functions relative to a character
and to an Eichler order $\calo$ respectively
rather than maximal orders):
\begin{itemize}
	\item $\Phi_\infty(x) = e^{-2\pi \verts{x}^2}$;
	\item $\Phi^\infty(x)$ is the characteristic
		function of $M_2(\hat{\Z})$;
\end{itemize}
and let $\omega = 1$. We have a big theta lifting
from $\cala_2$, the space of automorphic forms perpendicular to
constant functions, to $\cala_1$, the space of functions that
rapidly decay at cusps:
\[
	\Theta: \cala_2\paren{\sbrac{\PGL_2} \times \sbrac{\PGL_2}}
		\lra \cala_1\paren{\sbrac{\PGL_2}}.
\]
Denote the dual lifting,
\[
	\Theta^*: \cala_1^*\paren{\sbrac{\PGL_2}}
		\lra \cala_2^*\paren{\sbrac{\PGL_2} \times \sbrac{\PGL_2}}.
\]

We will now apply Theorem \ref{thm:theta-dual-eisenstein}
to the standard Siegel Eisenstein series in
the case that $\mu_1 = \mu_2 = 1$ and
$f$ is the function with value $1$ on $\K$:
\[
	E_s(g) = \zeta^*(2s) \sum_{\gamma \in P(\Q) \bs \GL_2(\Q)}
		f(\gamma g),
\]
which is the automorphic avatar of the classical Eisenstein series,
\[
	E(z, s) = \zeta^*(2s) \sum_{\gamma \in \Gamma_\infty \bs \SL_2(\Z)}
			\big(\Im (\gamma z)\big)^s.
\]
Consider the Laurent series of $E_s$ at $s = 0$,
with $E_s = \sum_{n=-1}^\infty \sce_n s^n$ and
$\sce_{-1} = -\frac{1}{2}$.
We establish the formulas of Theorem \ref{thm:theta-dual-eisenstein-2}
for the dual theta lifting evaluated on the Eisenstein series and
the coefficients $\sce_n$, giving an analogue of the
higher Eisenstein series formula of \cite[Theorem 5.4]{dhrv}
and the Siegel--Weil formula.

\begin{proof}[{Proof of Theorem \ref{thm:theta-dual-eisenstein-2}}]
	By the proof of Theorem \ref{thm:theta-dual-eisenstein},
	we know that $\Theta^*(E_s)$ is the
	Eisenstein series for $\PGL_2 \times \PGL_2$ defined by 
	\[
		F_s\big(h_1, h_2\big)
			:= \zeta^*(2s) \, \verts{\det h_1h_2^{-1}}^{-s}
				\int_{\A^\times} \Phi\big(zh_1^{-1}x_0 h_2\big)
				\verts{z}^{2s} dz.
	\]
	It is clear that $F_s$ is spherical, so it is equal to 
	$cf(h_1)f(h_2)$ for some constant $c$. To compute $c$,
	we take $h_1 = h_2 = 1$ to obtain 
	$c = \zeta^*(2s)^2$.
	Thus we have the first formula,
	\[
		\Theta^*(E_s) = E_s \otimes E_s.
	\]

	The second formula then follows from the first one.
	For any $\varphi \in \cala_2([\PGL_2]\times [\PGL_2])$, we have
	\begin{align*}
		\sum_{n \geq -1}\pair{\Theta^*(\sce_n), \varphi}s^n
			&= \pair{\Theta^*(E_s), \varphi} \\
			&= \pair{E_s \otimes E_s, \varphi}\\
			&= \pair{\sum_{n \geq -2}
				\paren{\sum_{i + j = n} \sce_i \sce_j s^n}, \varphi} \\
			&= \sum_{n \geq -1}
				\pair{\sum_{i + j = n} \sce_i\sce_j, \varphi} s^n.
	\end{align*}
	In the last step,
	we have used the fact that $\varphi$ is perpendicular to
	constant functions. 
\end{proof}


\section{Stark units for imaginary quadratic fields}
\label{sec:units}

\subsection{The unit of Stark}
\label{sec:Stark-unit}
In the imaginary dihedral (i.e. imaginary quadratic) setting,
the Stark conjecture is known
due to the original work of Stark \cite{stark-1980}.
There is a decomposition
$\Ad(\rho_f) \cong \eta \oplus \ind_{G_K}^{G_\Q}(\xi)$
of the (trace-free) adjoint representation
into the direct sum of
a quadratic character $\eta$
and the induced representation of a
character $\xi$ of $G_K = \gal(\overline{K}/K)$
for an imaginary quadratic number field $K$.
Complex conjugation acts on
$\Ad(\rho_f)$
with eigenvalues $1, -1, -1$
so the space of units
$\calu(\Ad(\rho_f))$ has rank $1$
by the Dirichlet unit theorem
(cf. \cite[Lemma 2.1]{hv}, \cite[Section 1]{dhrv});
here the rank,
\[
	r\big(\Ad(\rho_f)\big) :=
		\sum_{v \mid \infty} \dim\paren{\Ad(\rho_f)^{\frob_v}},
\]	
is also the order of vanishing of the Artin $L$-function
$L(\Ad(\rho_f), s)$ at $s = 0$
(cf. \cite[Section 1]{stark-1975}, \cite[Section 1]{tate-stark-1982},
Proof of Proposition \ref{prop:stark-dihedral}).

First, we recall an original result of Stark \cite{stark-1980}
that proves a weaker but explicit version of his conjecture
for CM characters.
Let $\mfc$ be the conductor of $\xi$, i.e.
the maximal ideal of $\calo_K$ such that 
$\xi$ is trivial on
$(1 + \wh{\mfc})^\times \subset \wh{\calo_K}^\times$
as a character on $\wh{K}^\times$.
Let $E = K(\mfc)$ be the corresponding ray class field with Galois
group $G(\mfc)$. Thus by class field theory, the Artin reciprocity map induces
an isomorphism
\[
	K^\times \bs \wh{K}^\times /(1 + \wh{\mfc})^\times \iso G(\mfc).
\]
Let $c$ be the minimal positive integer divisible by $\mfc$ and
let $w(\mfc)$ be the number of roots of unity in $k$ 
which are congruent to $1 \pmod \mfc$.
Recall that for rational numbers and
imaginary quadratic fields $K$,
Stark's conjecture
\cite[Conjecture 1]{stark-1980}
is only stated for $c>1$
due to the non-vanishing of $\zeta_K(0)$.
\begin{theorem}[{Stark \cite[Lemma 7]{stark-1980}}] 
	\label{thm:stark1}
	Let $K$ be an imaginary quadratic number field.
	If $\xi$ is a non-trivial character of
	$\gal(\overline{K}/K)$ of non-trivial conductor, then
	there is an element $\epsilon(\mfc) \in K(\mfc)^\times$
	such that,
	\[
		L'(\xi, 0) = -\frac{1}{6c w(\mfc)} \sum_{\sigma \in G(\mfc)}
			\xi(\sigma) \log\verts{\epsilon(\mfc)^\sigma}.
	\]
\end{theorem}
\begin{remark}
	Our notation here deviates from \cite{stark-1980}:
	our $\mfc$, $c$, $\sigma$ are respectively $\mff$, $f$, and $\mfc$
	in Stark's paper.
\end{remark}

Stark proves Theorem~\ref{thm:stark1}
using the second Kronecker limit formula.
For $u, v \in \R$, $z \in \calh$,
$q := e^{2\pi i z}$, and $\zeta := e^{2\pi i (uz + v)}$,
we have the Siegel function,
\[
	g(u, v, z) := -i q^{\frac{1}{12}}
		\paren{\zeta^{\frac{1}{2}} - \zeta^{-\frac{1}{2}}}
		\prod_{m = 1}^\infty
		\paren{1 - q^m \zeta} \paren{1 - q^m\zeta ^{-1}}.
\]
Again with $H_c$ the corresponding ring class field to
the conductor of $\xi$, fix
a decomposition of $\calo_{K(\mfc)} = \Z + z\Z$
and $\mfc = c(\Z + z \Z)$
with $z \in \calh \cap K(\mfc)$.
Then by \cite[Equations (9), (10), and (45)]{stark-1980}, 
\[
	\epsilon(\mfc) = g\paren{0, \frac{1}{c}, z}^{12 c}.
\]
Stark described the Galois conjugates of $\epsilon(\mfc)$ as follows.
For any $\sigma \in G$ represented by ideal $\mfa$ prime to $\mfc$,
let $\mfb$ be an ideal such that $\mfa \mfb = (\alpha(\sigma))$
for some $\alpha(\sigma) \in K^\times$
and write $\mfc \mfb = c(\sigma)(\Z + z(\sigma)\Z)$.
Then $\alpha(\sigma)/c(\sigma) = u(\sigma) z(\sigma) + v(\sigma)$ for
some rational numbers $u(\sigma)$ and $v(\sigma)$, and,
\[
	\epsilon(\mfc)^\sigma = g\paren{u(\sigma), v(\sigma), z(\sigma)}^{12c}.
\]

We can give a precise version of Stark's theorem
that is not stated by Stark \cite{stark-1980} nor Tate
\cite{tate-stark-1982, tate-stark-1984}
but which we deduce directly from
Theorem \ref{thm:stark1}.
Define $\epsilon(c) := \rmn_{K(\mfc)/H_c}(\epsilon(\mfc))$
and denote the modular discriminant by
$\Delta = q \prod_{n \geq 1} (1 - q^n)^{24}$.

\begin{proposition}
	\label{prop:stark}
	Assume that $\xi$ is a ring class character of conductor $c>1$.
	Then,
	\[
		L'(\xi, 0) = -\frac{1}{6c} \sum_{\sigma\in G(\mfc)}
			\xi(\sigma) \log\verts{\epsilon(\mfc)^\sigma},
	\]
	and,
	\[
		\epsilon(c) = \prod_{d \mid c}
			\Delta\paren{q^{d}}^{c \mu\paren{\frac{c}{d}}} \in H_c^\times.
	\]
\end{proposition}

\begin{proof}
	By Theorem \ref{thm:stark1}, we only need to compute the norm
	$\epsilon(c)$ of $\epsilon(\mfc)$.
	Let $\calo_c = \Z + c\calo_{K(\mfc)}$.
	Then $\mfc = c\calo_{K(\mfc)}$ and we have,
	\[
		\Gal\paren{K(\mfc) / H_c}
			\iso K^\times \paren{\wh{\Z} + c \wh{\calo}_K}^\times
				/ K^\times \paren{1 + \wh{\mfc}}^\times
			\iso \paren{\wh{\Z} + c \wh{\calo}_K}^\times
				/ \paren{1 + \wh{\mfc}}^\times
			\iso (\Z/c\Z)^\times.
	\]

	Thus every element in $\sigma \in \Gal(K(\mfc) / H_c)$ is
	represented by an	ideal $\mfa(\sigma) = n\calo_K$
	with $n$ coprime to $c$.
	We take $\mfb(\sigma) = m\calo_K$ with $\alpha(\sigma) = mn$.
	Write $\calo_K = \Z + z \Z$. Then $c(\sigma) = cm$, $u(\sigma) = 0$,
	and $v(\sigma) = n/c$.
	It follows that,
	\[
		\rmn_{K(\mfc) / H_c}\big(\epsilon(\mfc)\big) = 
			\prod_{n \in (\Z/c\Z)^\times}
			g\paren{0, \frac{n}{c}, z}^{12 c}.
	\]

	Let $\zeta_c = e^{2\pi i/c}$.
	Then $\rmn_{K(\mfc) / H_c}(\epsilon(\mfc))$
	is a product,
	\[
		q^{c\phi(c)}\cdot \rmn\paren{1 - \zeta_c}^{12 c}
			\prod_{m = 1}^\infty \prod_{n \in (\Z/c\Z)^\times}
			\paren{1 - q^m\zeta_c^n}^{24c},
	\]
	where $\phi$ is the Euler totient function.

	The term $m(c) := \rmn(1 - \zeta_c)$ is equal to $1$ unless $c$ is a prime,
	in which case it is equal to $c$.
	The other terms can be computed by M\"{o}bius inversion,
	$\phi(c) = \sum_{d \mid c} d \cdot \mu(c/d)$,
	\[
		\prod_{n \in (\Z/c\Z)^\times} \big(1 - \zeta_c^n T\big)
			= \prod_{d \mid c} \paren{1 - T^d}^{\mu\paren{\frac{c}{d}}}.
	\]
	Thus we obtain
	\begin{align*}
		\epsilon(c)
			:= \, &\rmn_{K(\mfc)/H_c} \big(\epsilon(\mfc)\big) \\
			= \, &m(c)^{12 c} q^{c\phi(c)}
				\prod_{m=1}^\infty \prod_{n \in (\Z/c\Z)^\times}
				\big(1 - \zeta_c^nq^m\big)^{24c} \\
			= \, &m(c)^{12c} q^{c\phi(c)} \prod_{d\mid c} \prod_{m = 1}^\infty
				\paren{1-q^{dm}}^{24 c\mu\paren{\frac{c}{d}}} \\
			= \, &m(c)^{12c} \prod_{d\mid c}
				\Delta\big(q^{d}\big)^{c\mu\paren{\frac{c}{d}}}.
	\end{align*}
\end{proof}

\subsection{The unit of Harris--Venkatesh}
\label{subsec:unit-hv}
For a ring class character $\xi$,
define
\[
	m(\xi) := 
		\begin{cases}
			v & \text{if $\Im(\xi)$ is a $v$-group,} \\
			1 & \text{otherwise.}
		\end{cases}
\]
Recall that for an auxiliary prime
$\lambda = \mfl \cdot \overline{\mfl}$ split in $K$
and coprime to the conductor of $\xi$,
\cite[Equation 8.2]{zhang-hv} defines an elliptic unit,
\begin{equation}
	\label{eq:uxi}
	u_\xi := \frac{m(\xi)}{1 - \xi\paren{\overline{\mfl}}}
		\, u_{\xi, \lambda},
\end{equation}
in terms of the elliptic unit
$u_{\xi, \lambda} \in H_c^\times \otimes \Z[\xi]$
given by Darmon--Harris--Rotger--Venkatesh
\cite[Definition 5.1]{dhrv}.
It is independent of the auxiliary
prime $\lambda$ by \cite[Proposition 8.4]{zhang-hv}.
We prove Proposition \ref{prop:stark2}, which relates
$L'(\xi, 0)$ to $-\frac{1}{6m(\xi)} \log(u_\xi)$, by relating
$u_\xi$ to the Siegel units from Section \ref{sec:Stark-unit}
and applying Proposition \ref{prop:stark}. 

\begin{proof}[{Proof of Proposition \ref{prop:stark2}}]
	Let $\tau = \Frob_\mfl \in \Gal(H_c/K)$.
	For any $d \mid c$, let $E_d$ denote the elliptic curve $\C/\calo_d$
	which is defined over $H_c$. Then we have the isogenies
	$E_d \lra E_d^\sigma$ with kernel $E[\mfl]$.
	With $\epsilon(c)$
	as in Proposition \ref{prop:stark}
	and $\eta$ the Dedekind eta function
	related to $\Delta$ by a power of $24$,
	\[
		\epsilon(c) - \epsilon (c)^\tau
			= c \sum_{d \mid c} \mu\paren{\frac{c}{d}} \eta(q^d).
	\]
	Take $\xi$-sums to obtain,
	\[
		\sum_{\sigma \in \Gal(H_c/K)}
			\big(\xi(\sigma) - \xi(\sigma\tau)\big) \epsilon(c)
			= c \sum_{d \mid c} \xi(\sigma) \eta(q^d)^\sigma.
	\]
	It follows that,
	\[
		\sum_{\sigma \in \Gal(H_c/K)} \xi(\sigma) \epsilon(c)
			= \frac{c}{1 - \xi(\tau)} \sum_{d \mid c} \mu\paren{\frac{c}{d}}
				\sum_{\sigma\in \Gal(H_c/K)}\xi (\sigma) \eta(q^d)^\sigma.
	\]
	Since $\eta(q^d)$ is in the ring class field $H_d$ corresponding
	to $\calo_d$, the last sum has a factor
	$\sum_{\sigma \in \Gal(H_c/H_d)} \xi(\sigma)$ which is zero if
	$d \neq c$ since $c$ is the conductor of $\xi$.
	Thus we have shown that,
	\[
		\sum_{\sigma \in \Gal(H_c/K)} \xi(\sigma) \epsilon(c)
			= \frac{c}{m(\xi)} \, u_\xi.
	\]
	The desired identity follows from Proposition \ref{prop:stark}.
\end{proof}

This allows us to give the explicit
relation between the unit $u_\xi \in H_c^\times \otimes \Z[\xi]$
and the Stark (dual) unit $u_\stark \in \calu(\Ad(\rho_f))$.

\begin{proposition}
	\label{prop:stark-dihedral}
	Let $K$ be an imaginary quadratic number field,
	$h_K$ be the class number of $K$,
	and $w_K$ be the number of roots of unity
	in $K$. There is a complex archimedean place
	$w$ of $H_c$ with distinguished element $x_w$
	of $\Ad(\rho_f)$ such that
	\[
		u_\stark(x_w) = \frac{h_K}{6m(\xi)w_K} \otimes u_\xi.
	\]
\end{proposition}

\begin{proof}
	From the decomposition
	$\Ad(\rho_f) = \eta \oplus \ind_{G_K}^{G_\Q}(\xi)$
	where $\eta$ is a quadratic character,
	\[
		L\big(\Ad(\rho_f), s\big) = L(\eta, s) \cdot L(\xi, s).
	\]
	Applying $L(\xi, 0) = 0$,
	Proposition \ref{prop:stark2}, and the class number formula,
	we see that $\Ad(\rho_f)$ has order of vanishing $1$
	at $s=0$ and
	\[
		L'\big(\Ad(\rho_f), 0\big)
			= L(\eta, 0) \cdot L'(\xi, 0)
			= \frac{h_K}{6m(\xi)w_K} \cdot \log(u_\xi),
	\]
	where the $\log$ is with respect to a fixed embedding
	$H_c \hookrightarrow \C$.
	Then there is a complex archimedean place $w$
	for which $u_{\stark}$ is the unique element such that
	\[
		u_{\stark}(x_w) = \frac{h_K}{6m(\xi)w_K} \otimes u_\xi.
	\]
\end{proof}

\begin{remark}
	A direct corollary of Proposition~\ref{prop:stark-dihedral} is
	\cite[Lemma~5.6]{dhrv}.
	It also implies that
	$6m(\xi)w_K \,u_\stark \in \calu(\Ad(\rho_f))$,
	since it evaluates to $h_K u_\xi$.
\end{remark}

Furthermore for finite places $v$ of $H_c$ over $\lambda$
that are inert in $K$,
\[
	u_{\stark}(x_v) = \frac{h_K}{6m(\xi)w_K} \otimes u_\xi.
\]
This is because the conjugacy class of $\frob_w$ does not change if
the archimedean place $w$ is replaced by an inert place $v$;
if $\lambda$ is inert in $K$ with a unique prime $\mfl$,
then $\mfl$ is completely split in $H_c$
and $x_{v} = x_w$.
Darmon--Harris--Rotger--Venkatesh \cite[Section 1.3]{dhrv}
observe that both sides of the Harris--Venkatesh conjecture
in the imaginary dihedral case vanish
when $\lambda$ is split in $K$,
so the results of \cite{zhang-hv}
can be rephrased in terms of $u_\stark$ instead of $u_\xi$.
With $h_K = [H_1 : K]$ for the Hilbert class field $H_1$,
we have the following reformulation of \cite[Theorem 8]{zhang-hv}
for the Harris--Venkatesh period $\calp_\hv$ (defined
in \cite[Section 2]{zhang-hv}) applied to the two-variable
optimal form $f^\opt$ (defined in \cite[Section 6]{zhang-hv}).
\begin{corollary}
	\label{cor:stark-opt}
	Let $f$ be an imaginary dihedral modular form of weight $1$ and
	level $N$, and let $f^\opt$ be the optimal form associated to $f$.
	For all primes $p \geq 5$ coprime to $N$,
	\begin{align*}
		\calp_\hv \paren{f^\opt}
			&= -\frac{\sbrac{H_c : H_1} w_K}{2 \sbrac{\PSL_2(\Z) : \Gamma_0(N)} \cdot} \Reg_{\F_p^\times} \paren{u_{\stark}}.
	\end{align*}
\end{corollary}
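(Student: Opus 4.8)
The plan is to deduce the corollary from \cite[Theorem~9]{zhang-hv} --- the Harris--Venkatesh formula for the optimal form of an imaginary dihedral weight-$1$ newform --- by rewriting it via the comparison between the elliptic unit $u_\xi$ and the Stark element $u_\stark$ established in Proposition~\ref{prop:stark-dihedral}. First I would recall that \cite[Theorem~9]{zhang-hv} gives, for every prime $p \geq 5$ coprime to $N$, an identity $\calp_\hv(f^\opt(z, pz)) = c' \cdot \Reg_{\F_p^\times}(u_\xi)$, where $c'$ is an explicit nonzero rational constant depending only on $K$, $\xi$, and the conductor $c$, and in particular independent of $p$; concretely $c'$ is $-[H_c:K]/(12\,m(\xi))$, but only its shape enters the argument.

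Next I would invoke Proposition~\ref{prop:stark-dihedral}: at the distinguished element $x_\infty$ of the unique archimedean place of the CM field $H_c$, one has $u_\stark(x_\infty) = \frac{h_K}{6\,m(\xi)\,w_K}\,u_\xi$ in $H_c^\times \otimes \Z[\xi]$, equivalently $u_\xi = \frac{6\,m(\xi)\,w_K}{h_K}\,u_\stark(x_\infty)$. Since $\Ad(\rho_f)$ has rank $1$ and $H_c$ possesses a single archimedean place, the regulator $\Reg_{\F_p^\times}$ of a class in $\calu(\Ad(\rho_f))$ is essentially computed as the mod-$p$ valuation of its value at $x_\infty$, and it is $\Z[\chi_{\Ad(\rho_f)}]$-linear; thus the scalar relation between $u_\xi$ and $u_\stark(x_\infty)$ transfers to $\Reg_{\F_p^\times}(u_\xi) = \frac{6\,m(\xi)\,w_K}{h_K}\,\Reg_{\F_p^\times}(u_\stark)$. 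Substituting into the first identity gives $\calp_\hv(f^\opt(z,pz)) = c'\,\frac{6\,m(\xi)\,w_K}{h_K}\,\Reg_{\F_p^\times}(u_\stark)$, and it remains only to check $c'\,\frac{6\,m(\xi)\,w_K}{h_K} = -\frac{[H_c:H_1]\,w_K}{2}$, which follows from $[H_c:K] = [H_c:H_1]\,h_K$ together with the value of $c'$.

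None of the steps is deep; the work lies in bookkeeping. The points to verify are: (i) that the normalization of $f^\opt$ and of $\calp_\hv$ here agrees with \cite{zhang-hv}, including the sign folded into $c'$; (ii) that the rational scalars relating $u_\xi$ and $u_\stark$ interact correctly with the coefficient group $\F_p^\times \otimes \Z[\chi_{\Ad(\rho_f)},\frac{1}{6N}]$ of $\Reg_{\F_p^\times}$, the denominators $6$, $m(\xi)\in\{1,p\}$, $w_K$, $h_K$ causing no difficulty exactly as in \cite{zhang-hv} (here part~(a) of Theorem~\ref{thm:hvs-CM} bounds the denominators of $u_\stark$); and (iii) that $u_\xi$ and $u_\stark$ are compared at the \emph{same} archimedean place, so the resulting identity of regulators is literal and not merely up to roots of unity. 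With the Harris--Venkatesh conjecture for imaginary dihedral optimal forms from \cite{zhang-hv} and the Stark-unit comparison of Proposition~\ref{prop:stark-dihedral} both in hand, the corollary is essentially a change of variables; the only real obstacle is keeping the constants $[H_c:H_1]$, $h_K$, $m(\xi)$, $w_K$ straight across the two normalizations.
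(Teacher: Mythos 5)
Your proposal is correct and follows the paper's proof essentially verbatim: cite \cite[Theorem~9]{zhang-hv} to get $\calp_\hv(f^\opt(z,pz)) = -\frac{[H_c:K]}{12\,m(\xi)}\Reg_{\F_p^\times}(u_\xi)$, then substitute $u_\xi = \frac{6\,m(\xi)\,w_K}{h_K}\,u_\stark(x_\infty)$ from Proposition~\ref{prop:stark-dihedral} and simplify using $[H_c:K]=[H_c:H_1]\,h_K$. The bookkeeping points you flag (identical archimedean place, denominators controlled by Theorem~\ref{thm:hvs-CM}(a), normalization of $\calp_\hv$) are the right things to check and match the paper's one-line proof together with the surrounding remarks.
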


\begin{proof}
	By \cite[Theorem 8]{zhang-hv} (without
	taking $\log_\ell$ for $\ell \geq 5$ since we have
	inverted $6$) and the
	equality $[\Gamma(1) : \Gamma_0(N)] \cdot \calp_\hv \paren{f^\opt} = \mfs_p \paren{f^\opt(z, pz)}$,
	\begin{align*}
		\calp_\hv \paren{f^\opt}
			&= \frac{1}{\sbrac{\PSL_2(\Z) : \Gamma_0(N)}} \mfs_p \paren{f^\opt(z, pz)} \\
			&= -\frac{\sbrac{H_c : K}}{12m(\xi) \sbrac{\PSL_2(\Z) : \Gamma_0(N)}} \Reg_{\F_p^\times}\paren{u_\xi}.
	\end{align*}
	Then apply Proposition \ref{prop:stark-dihedral}.
\end{proof}


\section{Global Rankin--Selberg periods}
\label{sec:rs-opt}

Let $f_1$ and $f_2$ be two homomorphic cusp forms of weight $1$
for a congruence subgroup $\Gamma$ of $\PSL_2(\Z)$
with central character $\omega$ and $\omega^{-1}$ respectively.
Then we define their Rankin--Selberg period
(or Rankin--Selberg pairing),
\[
	\calp_\rs\paren{f_1 \otimes f_2}
		:= \frac{1}{\sbrac{\PSL_2(\Z) : \Gamma}} \int_{X(\Gamma)}
			f_1(z) f_2(-\overline{z}) y d\mu
\]
where $d\mu = \frac {dxdy}{y^2}$.
This definition does not depend on the choice of $\Gamma$
and extends to a pairing 
on the space $S_1(\omega)$ and $S_1(\omega^{-1})$
of forms of weight $1$ with central character
$\omega^{\pm 1}$:
\[
	\calp_\rs: S_1\big(\omega\big) \otimes S_1\big(\omega^{-1}\big)
		\lra \C.
\]

In \cite[Equations 1.3 and 1.4]{zhang-hv}, we
gave the correspondence between automorphic forms of weight $1$
and modular forms of weight $1$.
The transform $z \mapsto -\overline{z}$ corresponds to
right-multiplication by $\epsilon_\infty$ 
($\epsilon := \begin{psmallmatrix} -1 & 0 \\ 0 & 1 \end{psmallmatrix}$
at each archimedean place, this is called $\eta$ in \cite[p. 11]{jacquet}).
Thus for two automorphic forms $\varphi_1, \varphi_2$ of weight $1$
with opposite central character,
we can define their global Rankin--Selberg period,
\[
	\calp_\rs \paren{\varphi_1 \otimes \varphi_2}
		:= \frac{1}{\sbrac{\PSL_2(\Z) : \Gamma_0(N)}} \int_{\sbrac{\PGL_2}}
			\varphi_1(g) \varphi_2(g \epsilon_\infty) dg,
\]
where we use the measure $dg = \verts{y}^{-1} dx d^\times y dk$
for the Iwasawa decomposition
$g = z n(x) a(y) k$ with $k \in \K$,
the standard maximal compact subgroup of $\SL_2(\A)$.
The analogous local theory is developed in
\cite[Section 2]{zhang-rs}.

\subsection{The imaginary dihedral case}
Let $K/\Q$ be an imaginary quadratic field,
$\chi$ be a finite character of $G_K := \Gal(\overline{K}/K)$,
and $\rho_f = \Ind_{G_K}^{G_Q}(\chi)$.
Let $\xi$ be the antinorm $\chi^{1 - \Frob_\infty}$ and
let $c$ be the conductor of $\xi$.
We have automorphic
representations $\pi(\chi)$ and $\pi(\chi^{-1})$ of
imaginary dihedral weight $1$ forms,
where there is the automorphic avatar of optimal forms
of weight $(1, 1)$ as given in \cite[Definition 6.1]{zhang-hv},
\[
	\varphi^\opt\paren{g_1, g_2} \in \pi(\chi) \otimes \pi(\chi^{-1}),
\]
with an expression in terms of theta series (cf. Section \ref{subsec:theta}),
\[
	\varphi^\opt\paren{g, g\epsilon_\infty}
		= \sum_{\alpha \in \calo_c/\cald}
			\theta\Big(g, \chi, \Phi_\alpha^\opt\Big)
			\theta\paren{g\epsilon, \chi^{-1}, \Phi_{-\alpha}^\opt}.
\]

Recall from \cite[Equation 5.9]{zhang-hv}
that $T = K^\times \times K^\times / \Delta \Q^\times$
with norm map $\rmn(t_1, t_2) = \rmn(t_1 t_2^{-1})$
and,
\[
	\theta\paren{g, \one \otimes \xi, \Phi_\calo}
		= \int_{\sbrac{T^1}} \theta(g, t_0t, \Phi_\calo)
			\cdot (\one \otimes \xi)(t_0t) dt,
\]
where $t_0 \in T(\A)$ has the same norm as $g$.
By modifying the proof of \cite[Proposition 6.4]{zhang-hv}
to use the split quaternion algebra $B \iso M_2(\Q)$ with
an optimal embedding,
\[
	\calo_{c(\xi)} \hookrightarrow \calo \iso M_2(\Z),
\]
we have, 
\[
	\theta\paren{g, \iota_{T*}\paren{\one \otimes \xi}, \Phi_\calo}
		= \varphi^\opt\paren{g, g\epsilon_\infty},
\]
for the embedding $\iota_T: [T] \hookrightarrow [\GSO(V)]$.
The proof is the same except at the archimedean place,
where we have the decomposition
$B_\infty = K_\infty + K_\infty j_\infty$
with $j_\infty^2 = 1$ so $\Phi_\calo$ is the product of
characteristic function of $\wh{\calo}$ and the function,
\[
	\Phi_\infty(x + yj) := e^{-\pi\paren{\verts{x}^2 + \verts{y}^2}}.
\]

Let $\iota: [\Q^\times \bs K^\times] \hookrightarrow [\PGL_2]$
be the embedding
obtained from the optimal embedding $K \hookrightarrow M_2(\Q)$.
In terms of the theta lifting $\Theta$ defined by $\Phi_\calo$,
\begin{align*}
	\theta(g, \one \otimes \xi, \Phi_\calo)
		&= \Theta(\iota_{T*}(\one \otimes \xi)), \\
	\sbrac{\PSL_2(\Z) : \Gamma_0(N)} \cdot \calp_\rs\paren{\varphi^\opt}
		&= \pair{\iota_*(\one \otimes \xi)_T, \Theta^*(\one)}.
\end{align*}
Then by Theorem \ref{thm:theta-dual-eisenstein-2},
\begin{align*}
	\sbrac{\PSL_2(\Z) : \Gamma_0(N)} \cdot \calp_\rs\paren{\varphi^\opt}
		&= \pair{\iota_*(\one \otimes \xi)_T,
			\one \otimes \sce_0 + \sce_0 \otimes \one} \\
		&= \pair{\iota_{K*}(\one), \one} \cdot
			\pair{\iota_{K*}(\xi), \sce_0}
			+ \pair{\iota_{K*}(\one), \sce_0} \cdot
			\pair{\iota_{K*}(\xi), \one}.
\end{align*}
Using the identities (cf. \cite[Proof of Proposition 5.5]{dhrv}),
\begin{align*}
	\pair{\iota_{K*}(\one), \one} &= h\paren{\calo_c}, \\
	\pair{\iota_{K*}(\xi), \one} &= 0,
\end{align*}
we have,
\begin{equation}
	\label{eq:opt-iota}
	\sbrac{\PSL_2(\Z) : \Gamma_0(N)} \cdot \calp_\rs\paren{\varphi^\opt}
		= h\paren{\calo_c} \pair{\iota_{K*}(\xi), \sce_0}.
\end{equation}
 
Now, we translate everything in the terms of continuous 
functions on $\SL_2(\Z) \bs \calh$ so that we
can use the Kronecker limit formula for $\sce_0$:
\[
	\sce_0(x) = C - \frac{1}{12} \log \paren{\norm{\Delta}(x)},
\]
where $C$ is a constant and,
\[
	\norm{\Delta}(x) = \verts{y \Delta(\tau)},
\]
where $x$ is represented by $\tau \in \calh$.
As in Section \ref{sec:units},
$H_c$ is the ring class field of $\xi$,
$H_1$ is the Hilbert class
field of $K$, and $h_K = [H_1 : K]$
is the class number of $K$.
\begin{theorem}
	\label{thm:opt-rs}
	\[
		\sbrac{\PSL_2(\Z) : \Gamma_0(N)} \cdot \calp_\rs\paren{\varphi^\opt}
			= -\frac{\sbrac{H_c: H_1} w_K}{2} \, \Reg_\R\paren{u_\stark}.
	\]
\end{theorem}
 
\begin{proof}
	Write $\calo_K = \Z + \Z\tau$ with $\tau \in \calh$.
	Then for any positive integer $d$, we have a CM point $x_d$
	representing a CM elliptic curve $\C /(\Z + \Z d\tau)$ by $\calo_d$.
	Since $\xi$ is non-trivial, we have by Equation \ref{eq:opt-iota},
	\begin{equation}
		\label{eq:opt-delta}
		\sbrac{\PSL_2(\Z) : \Gamma_0(N)} \cdot \calp_\rs\paren{\varphi^\opt}
			= -\frac{1}{12} h\paren{\calo_c}
				\sum_{\sigma \in \Gal\paren{H_c/K}}
				\xi(\sigma) \log \paren{\norm{\Delta} \paren{x_c^\sigma}}.
	\end{equation}

	Consider the function,
  \[
		e_c(q) = \prod_{d \mid c} \Delta\paren{q^d}^{c\mu\paren{\frac{c}{d}}},
	\]
	where $\mu$ is the M\"obius function
	(cf. Proposition \ref{prop:stark}).
	Since $e_c$ has weight $0$, it is easy to see that,
	\[
		\verts{e_c(x_c)} = \prod_{d \mid c}
			\big(\norm{\Delta}\paren{x_d}\big)^{c\mu \paren{\frac{c}{d}}}.
	\]
	Thus,
	\[
		\sum_{\sigma \in \Gal\paren{H_c/K}} \xi(\sigma) \log \big(\verts{e_c \paren{x_c^\sigma}}\big)
			= \sum_{d \mid c} c \mu\paren{\frac{c}{d}} \sum_{\sigma \in \Gal\paren{H_c/K}}
				\xi(\sigma) \log \big(\norm{\Delta}\paren{x_d^\sigma}\big).
	\]
	For $d < c$, the second sum vanishes,
	as $x_d$ is defined over $H_d$ and
	$\xi$ is non-trivial on $\Gal(H_c/H_d)$.
	Therefore we only have the $d = c$ term,
	\[
		\sum_{\sigma \in \Gal\paren{H_c/K}} \xi(\sigma) \log \big(\verts{e_c \paren{x_c^\sigma}}\big)
			= c \sum_{\sigma \in \Gal\paren{H_c/K}}
				\xi(\sigma) \log \big(\norm{\Delta}\paren{x_c^\sigma}\big).
	\]
	Applying this to Equation \ref{eq:opt-delta} yields,
	\[
		\sbrac{\PSL_2(\Z) : \Gamma_0(N)} \cdot \calp_\rs\paren{\varphi^\opt}
			= -\frac{1}{12c} h\paren{\calo_c}
				\sum_{\sigma \in \Gal\paren{H_c/K}} \xi(\sigma)
				\log \big(\verts{e_c\paren{x_c^\sigma}}\big).
	\]
	The proof is completed by applying Proposition \ref{prop:stark} to the
	above and observing
	\[
		\Reg_\R\paren{u_\stark}
			= L'\big(\Ad(\rho_f), 0\big)
			= L(\eta, 0) \cdot L'(\xi, 0)
			= -\frac{h_K}{w_K} L'(\xi, 0).
	\]
\end{proof}


\section{Proof of the compatibility theorem}
\label{sec:proof-hvs}

To prove Theorem \ref{thm:hvs-imaginary}, we use the Rankin--Selberg
period from Section \ref{sec:rs-opt} and the
Harris--Venkatesh periods from \cite[Section 2]{zhang-hv},
which respectively extend the Petersson and Harris--Venkatesh norms
(with an extra factor of $[\PSL_2(\Z) : \Gamma_0(N)]^{-1}$)
to pairings on the dual cuspidal automorphic representations
$\pi_f$ and $\wt{\pi}_f = \pi_{f^*}$ of $\gl_2(\A)$ 
(respectively generated by $f$ and $f^*$
under the action of Hecke operators):
\begin{align*}
	\calp_\rs: \pi_f \otimes \wt{\pi}_f
		&\lra \R \otimes \Z\sbrac{\chi_{\Ad\paren{\rho_f}}, \frac{1}{6N}}, \\
	\calp_\hv: \pi_f \otimes \wt{\pi}_f
		&\lra \F_p^\times \otimes \Z\sbrac{\chi_{\Ad\paren{\rho_f}}, \frac{1}{6N}}.
\end{align*}
Following the multiplicity-one argument of
\cite[Theorem 6]{zhang-hv},
there is a reformulation of Conjecture \ref{conj:hvs}.
\begin{proposition}
	\label{prop:hvs-reform}
	Conjecture \ref{conj:hvs} is equivalent to the
	existence of an element $\varphi \in \pi_f \otimes \wt{\pi}_f$ with
	$\calp_\rs(\varphi) \neq 0$ and
	a unique element $u_\varphi \in \calu(\Ad(\rho_f)) \otimes \Q$
	such that,
	\[
		\calp_\rs(\varphi) = \Reg_\R \paren{u_\varphi},
	\]
	and such that for $p \gg 0$,
	\[
		\calp_\hv(\varphi) = \Reg_{\F_p^\times}\paren{u_\varphi}.
	\]
	Moreover, if Conjecture \ref{conj:hvs} is true then
	the above property holds for all
	$\varphi \in \pi_f \otimes \wt{\pi}_f$
	unramified away from $N$.
\end{proposition}

\begin{proof}
	By \cite[Proposition 3.1 and Section 4]{zhang-hv},
	there is a unique bilinear form $\calp$ on
	$\pi_f \otimes \wt{\pi}_f$ over $\Z[\chi_{\Ad\paren{\rho_f}}, 1/6N]$
	that is furthermore unique modulo $p$.
	The Harris--Venkatesh and
	Rankin--Selberg periods are therefore scalar multiples of
	$\calp$ over $\F_p^\times$ and $\R$
	respectively.
	Hence the ratios of $\calp_\hv$ and
	$\calp_\rs$ evaluated at any two test vectors
	$\varphi_1$ and $\varphi_2$ are equal
	(cf. \cite[Equation 4.1]{zhang-hv}),
	\begin{equation*}
			\alpha_{\varphi_1, \varphi_2} :=
			\sbrac{\calp_\hv\paren{\varphi_1} : \calp_\hv\paren{\varphi_2}}
			= \sbrac{\calp\paren{\varphi_1} : \calp\paren{\varphi_2}}
			= \sbrac{\calp_\rs\paren{\varphi_1} : \calp_\rs\paren{\varphi_2}}.
	\end{equation*}
	As in \cite[Section 4]{zhang-hv},
	if $\calp_\rs(\varphi_1), \calp_\rs(\varphi_2) \neq 0$
	then $\alpha_{\varphi_1, \varphi_2}$ is nonzero and rational.
	In particular, if there is a prime $p_0$
	such that for all primes $p \geq p_0$,
	\begin{align*}
		&\calp_\rs(\varphi_1)
			= \Reg_\R \paren{u_{\varphi_1}}, \qquad \hphantom{\alpha_{\varphi_1, \varphi_2} \cdot \,}
		\calp_\hv(\varphi_1)
			= \Reg_{\F_p^\times}\paren{u_{\varphi_1}}, \\
		\intertext{then for all primes $p \geq p_0$,}
		&\calp_\rs(\varphi_2)
			= \alpha_{\varphi_1, \varphi_2} \cdot
				\Reg_\R \paren{u_{\varphi_1}}, \qquad
		\calp_\hv(\varphi_2)
			= \alpha_{\varphi_1, \varphi_2} \cdot
				\Reg_{\F_p^\times}\paren{u_{\varphi_1}},
	\end{align*}
	Scaling $u_{\varphi_1}$ by $\alpha_{\varphi_1, \varphi_2}$
	gives the desired element
	$u_{\varphi_2}$ of $\calu(\Ad(\rho_f)) \otimes \Q$.
\end{proof}

For $f$ imaginary dihedral, the Stark conjecture for
$\Ad(\rho_f)$ is known
due to Stark \cite{stark-1980}
so we have a unique element $u_\stark$
of $\calu(\Ad(\rho_f)) \otimes \Q$
such that $L'(\Ad(\rho_f), 0) = \Reg_\R(u_\stark)$.
Recall from \cite[Section 6]{zhang-hv} that
for any imaginary dihedral modular form $f$
there is a two-variable optimal form $f^\opt$
with automorphic avatar $\varphi^\opt$ in
$\pi_f \otimes \wt{\pi}_f$ unramified outside of $N$.
Again with $w_K$ the number of roots of unity of
the imaginary quadratic number field $K$,
$H_c$ the ring class field associated to
$\xi := \chi^{1 - \Frob_\infty}$,
and
\[
	m(\xi) := 
		\begin{cases}
			v & \text{if $\Im(\xi)$ is a $v$-group,} \\
			1 & \text{otherwise,}
		\end{cases}
\]
take $u_{\varphi^\opt} = - \frac{[H_c : H_1] w_K}{2 \, [\PSL_2(\Z) : \Gamma_0(N)]} \otimes u_\stark$.
This is a suitable element of $\calu(\Ad(\rho_f)) \otimes \Q$
that evaluates at $x_\infty$
to $-\frac{[H_c : K]}{12m(\xi) \, [\PSL_2(\Z) : \Gamma_0(N)]} \otimes u_\xi$
by Proposition \ref{prop:stark-dihedral}.
We conclude the proof of Theorem \ref{thm:hvs-imaginary}
by using our results to show that $\varphi^\opt$
and $u_{\varphi^\opt}$ satisfy the conditions
of Proposition \ref{prop:hvs-reform}.

\begin{lemma}
	\label{lem:hvs-CM}
	Let $f$ be an imaginary dihedral form of weight $1$.
	For its associated optimal form $\varphi^\opt$,
	\begin{enumerate}[(a)]
		\item $\calp_\rs(\varphi^\opt) = \Reg_{\R}(u_{\varphi^\opt})$;
		\item $\calp_\hv(\varphi^\opt) = \Reg_{\F_p^\times}(u_{\varphi^\opt})$
			for all primes $p \nmid 6N$.
	\end{enumerate}
\end{lemma}

\begin{proof}
	\,

	(a): By Theorem \ref{thm:opt-rs}:
	\[
		\calp_\rs\paren{\varphi^\opt}
			= -\frac{\sbrac{H_c : H_1} w_K}{2 \, [\PSL_2(\Z) : \Gamma_0(N)]} \Reg_\R\paren{u_\stark}
			= \Reg_\R\paren{u_{\varphi^\opt}}.
	\]

	(b): By Corollary \ref{cor:stark-opt}, for all primes $p \nmid 6N$:
	\[
		\calp_\hv \paren{f^\opt}
			= -\frac{\sbrac{H_c : H_1} w_K}{2 \, [\PSL_2(\Z) : \Gamma_0(N)]} \Reg_{\F_p^\times} \paren{u_{\stark}}
			= \Reg_{\F_p^\times}\paren{u_{\varphi^\opt}}.
	\]
\end{proof}


\bibliography{bibliography}{}
\bibliographystyle{amsalpha}

\end{document}